\documentclass[10pt, 3p, lefttitle]{elsarticle}
\journal{arXiv}

\usepackage{amssymb}
\usepackage[fleqn]{amsmath}
\usepackage{amsthm}
\usepackage{xcolor}
\usepackage{upgreek}
\usepackage{nicefrac}
\usepackage[hidelinks]{hyperref}
\usepackage{booktabs}
\usepackage{subcaption}

\newcommand{\bs}[1]{{\boldsymbol #1}}

\def\plus{\texttt{+}}

\renewcommand{\uplus}{u_h^{\plus}}
\newcommand{\zplus}{z_h^{\plus}}
\newcommand{\phiplus}{\varphi_h^{\plus}}
\newcommand{\eplus}{e_h^{\plus}}
\newcommand{\Vplus}{\mathcal{V}_h^{\plus}}
\newcommand{\eps}{\varepsilon}

\newtheorem{lemma}{Lemma}
\newtheorem{theorem}{Theorem}
\newtheorem{corollary}{Corollary}
\theoremstyle{definition}
\newtheorem{assumption}{Assumption}
\newtheorem{definition}{Definition}
\newtheorem{example}{Example}
\newtheorem*{results}{Numerical Demonstration}
\newtheorem{remark}{Remark}

\begin{document}

\begin{frontmatter}

\title{A Note on the Reliability of Goal-Oriented Error
Estimates for Galerkin Finite Element Methods with Nonlinear Functionals}
\author[sandia]{Brian N. Granzow\corref{correspondence}}
\author[sandia]{Stephen D. Bond}
\author[sandia]{D. Thomas Seidl}
\author[luh]{Bernhard Endtmayer}
\address[sandia]{Sandia National Laboratories. P.O. Box 5800. Albuquerque, NM. 87185-1321.}
\address[luh]{Leibniz University Hannover. Welfengarten 1. 30167. Hannover Germany. \vspace{-3em}}
\cortext[correspondence]{Corresponding author, bngranz@sandia.gov}
\fntext[label1]{
B.N. Granzow, D.T. Seidl, and S.D. Bond were supported by the Advanced Simulation
and Computing program at Sandia National Laboratories, a multimission laboratory
managed and operated by National Technology and Engineering Solutions of Sandia LLC,
a wholly owned subsidiary of Honeywell International Inc. for the
U.S. Department of Energy's
National Nuclear Security Administration under contract DE-NA0003525. This
article describes objective technical results and analysis. Any subjective
views or opinions that might be expressed in the article do not necessarily
represent the views of the U.S. Department of Energy or the United States
Government. }

\begin{keyword}
a posteriori,
goal-oriented,
error estimation,
nonlinear functional,
reliability
\end{keyword}

\begin{abstract}
We consider estimating the discretization error in a nonlinear functional
$J(u)$ in the setting of an abstract variational problem: find
$u \in \mathcal{V}$ such that $B(u,\varphi) = L(\varphi) \; \forall \varphi
\in \mathcal{V}$, as approximated by a Galerkin finite element method. Here,
$\mathcal{V}$ is a Hilbert space, $B(\cdot,\cdot)$ is a bilinear form, and
$L(\cdot)$ is a linear functional. We consider well-known error estimates
$\eta$ of the form $J(u) - J(u_h) \approx \eta = L(z) - B(u_h, z)$, where
$u_h$ denotes a finite element approximation to $u$, and $z$ denotes the
solution to an auxiliary adjoint variational problem. We show that there exist
nonlinear functionals for which error estimates of this form are not reliable,
even in the presence of an exact adjoint solution solution $z$. An estimate
$\eta$ is said to be reliable if there exists a constant
$C \in \mathbb{R}_{>0}$ independent of $u_h$ such that
$|J(u) - J(u_h)| \leq C|\eta|$. We present several example pairs of
bilinear forms and nonlinear functionals where reliability of $\eta$ is not
achieved.
\end{abstract}

\end{frontmatter}

\section{Introduction}
\label{sec:introduction}

\noindent\emph{A posteriori} error estimation \cite{ainsworth1997posteriori,
stewart1998tutorial} provides useful tools to approximate and control
discretization errors in finite element settings. Goal-oriented error
estimation \cite{becker1996feed,becker2001optimal,oden2001goal,
oden2002estimation,fidkowski2011review,granzow2017output,endtmayer2020two,
endtmayer2024posteriori} measures the discretization error in a functional
quantity of interest, rather than in a global norm. Inherent to the process of
goal-oriented error estimation is a linearization and a discard of higher order
information. In a recent article \cite{granzow2023linearization}, we described
this discard as a \emph{linearization error} and presented several examples
where traditional adjoint-weighted residual error estimates fail to achieve
effectivity (an accurate approximation of the functional discretization error).
In this note, we provide further insight into this loss of effectivity by
showing that there exist specific choices of nonlinear functionals for which
certain adjoint-weighted residual estimates are not \emph{reliable}, the
setting for which is described in the abstract.

\section{Preliminaries}

\noindent Consider the following variational problems:
\begin{align}
&\mathrm{Find} \, u \in \mathcal{V}     \; \mathrm{s.t.} \; B(u, \varphi)     = L(\varphi)    \quad \forall \varphi \in \mathcal{V},      \label{eq:primal} \\
&\mathrm{Find} \, u_h \in \mathcal{V}_h \; \mathrm{s.t.} \; B(u_h, \varphi_h) = L(\varphi_h)  \quad \forall \varphi_h \in \mathcal{V}_h,  \label{eq:primal_fem} \\
&\mathrm{Find} \, \uplus \in \Vplus     \; \mathrm{s.t.} \; B(\uplus, \phiplus) = L(\phiplus) \quad \forall \phiplus \in \Vplus,          \label{eq:primal_fem2}
\end{align}
where $\mathcal{V}$ denotes a Hilbert space,
$B(\cdot, \cdot) : \mathcal{V} \times \mathcal{V} \to \mathbb{R}$
denotes a bilinear form, $L(\cdot) : \mathcal{V} \to \mathbb{R}$ denotes
a linear functional, and $\mathcal{V}_h$ and $\Vplus$ denote finite dimensional
subspaces of $\mathcal{V}$ such that $\mathcal{V}_h \subset \Vplus \subset
\mathcal{V}$, where $h \in \mathbb{R}_{>0}$ denotes a discretization parameter.

Let $J(u) : \mathcal{V} \to \mathbb{R}$ denote a nonlinear functional with
sufficient regularity such that its first and second Fr\'{e}chet
derivatives $J'$ and $J''$, respectively, exist. We seek to approximate the
(assumed well-posed) primal problem \eqref{eq:primal} with a Galerkin
finite element method \eqref{eq:primal_fem} and estimate the functional
discretization error $J(u) - J(u_h)$ with the introduction of an
auxiliary linear \emph{adjoint problem}. Presently, we consider the
following variational problems:
\begin{align}
&\mathrm{Find} \, z \in \mathcal{V} \; \mathrm{s.t.} \; B(\varphi, z) = J'(u_h; \varphi) \quad \forall \varphi \in \mathcal{V}, \label{eq:adjoint} \\
&\mathrm{Find} \, \zplus \in \Vplus \; \mathrm{s.t.} \; B(\phiplus, \zplus) = J'(u_h; \phiplus) \quad \forall \phiplus \in \Vplus, \label{eq:adjoint_fem}
\end{align}
where we refer to the first variational problem \eqref{eq:adjoint} as
the continuous adjoint problem and \eqref{eq:adjoint_fem} represents
a finite element approximation to the continuous adjoint problem.
It is well known that if the adjoint problem is approximated in the
space $\mathcal{V}_h$, then traditional adjoint-weighted residual error
estimates will be erroneously zero due to \emph{Galerkin orthogonality}:
$B(e, \varphi_h) = 0 \; \forall \varphi_h \in \mathcal{V}_h$, where
$e := u - u_h$ denotes the discretization error. To this end, we
consider approximating the adjoint problem in the richer finite-dimensional
space $\Vplus$.
\begin{remark}
Practically, we consider only standard piecewise polynomial finite element
spaces $\mathcal{V}_h$ and globally enriched spaces $\Vplus$ obtained
by increasing the piecewise polynomial order of the space $\mathcal{V}_h$ by
one. The following discussion should, however, apply in broader contexts,
provided $\mathcal{V}_h \subset \Vplus$.
\end{remark}

\begin{definition}
\label{def:functional_derivs}
(Functional derivatives) The first and second G\^{a}teaux derivatives of
a functional $J: \mathcal{V} \to \mathbb{R}$ are defined as
\begin{align}
J'(u;v) &:= \lim_{\eps \to 0} \eps^{-1} \left[ J(u + \eps v) - J(u) \right] = \frac{\mathrm{d}}{\mathrm{d} \eps} J(u + \eps v) \biggr|_{\eps = 0}, \\
J''(u;v,w) &:= \lim_{\eps \to 0} \eps^{-1} \left[ J'(u + \eps w; v) - J'(u;v) \right] = \frac{\mathrm{d}}{\mathrm{d} \eps} J'(u + \eps w; v) \biggr|_{\eps = 0},
\end{align}
respectively. These derivatives agree with their corresponding Fr\'{e}chet derivatives
when the functional is once and twice Fr\'{e}chet differentiable.
\end{definition}

\begin{lemma}
\label{lma:taylor}
Let $u \in \mathcal{V}$, $u_h \in \mathcal{V}_h$,  $\uplus \in \Vplus$,
$e:= u - u_h$ and $\eplus := \uplus - u_h$. Then the following Taylor expansions hold:
\begin{align}
J(u) - J(u_h)       &=  J'(u_h; e) + \int_0^1 J''(u_h+se;e,e)(1-s) \, \mathrm{d}s,                     \label{eq:J_taylor} \\
J(\uplus) - J(u_h)  &=  J'(u_h; \eplus) + \int_0^1 J''(u_h+s\eplus;\eplus,\eplus)(1-s) \, \mathrm{d}s. \label{eq:J_taylor_fem}
\end{align}
\end{lemma}

\begin{lemma}
\label{lma:error_representation}
Let $u \in \mathcal{V}$, $u_h \in \mathcal{V}_h$, and $z \in \mathcal{V}$
solve problems \eqref{eq:primal}, \eqref{eq:primal_fem}, and \eqref{eq:adjoint},
respectively, and let $e := u - u_h$. Then the following error representation holds
\begin{equation}
J(u) - J(u_h) = L(z) - B(u_h,z) + \int_0^1 J''(u_h+se;e,e)(1-s) \, \mathrm{d}s.
\end{equation}
\end{lemma}
\begin{proof}
The proof follows by sequentially considering equations \eqref{eq:J_taylor}
and \eqref{eq:adjoint}, applying bilinearity, and then using equation \eqref{eq:primal}.
\end{proof}

\begin{lemma}
\label{lma:error_representation_h}
Let $\uplus \in \Vplus$, $u_h \in \mathcal{V}_h$, and $\zplus \in \Vplus$ 
solve problems \eqref{eq:primal_fem2}, \eqref{eq:primal_fem}, and
\eqref{eq:adjoint_fem}, respectively, and let $\eplus := \uplus - u_h$.
Then the following error representation holds
\begin{equation}
J(\uplus) - J(u_h) = L(\zplus) - B(u_h,\zplus) + \int_0^1 J''(u_h+s\eplus;\eplus,\eplus)(1-s) \, \mathrm{d}s.
\end{equation}
\end{lemma}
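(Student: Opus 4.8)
The plan is to mirror the argument used for Lemma \ref{lma::error_representation}, replacing the continuous problems with their discrete counterparts posed on $\Vplus$. The essential structural difference is that every object now lives in the enriched space $\Vplus$, so the test functions I need are admissible in the discrete variational problems, and no appeal to the full space $\mathcal{V}$ is required.

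First I would invoke the Taylor expansion \eqref{eq:J_taylor_fem} to write
\begin{equation*}
J(\uplus) - J(u_h) = J'(u_h; \eplus) + \int_0^1 J''(u_h+s\eplus;\eplus,\eplus)(1-s) \, \mathrm{d}s.
\end{equation*}
Next, since $u_h \in \mathcal{V}_h \subset \Vplus$ and $\uplus \in \Vplus$, the error $\eplus = \uplus - u_h$ belongs to $\Vplus$ and is therefore an admissible test function in the discrete adjoint problem \eqref{eq:adjoint_fem}. Choosing $\phiplus = \eplus$ there yields $J'(u_h; \eplus) = B(\eplus, \zplus)$, converting the linearized term into a bilinear-form evaluation.

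Finally I would exploit bilinearity to split $B(\eplus, \zplus) = B(\uplus, \zplus) - B(u_h, \zplus)$, and then use that $\zplus \in \Vplus$ is an admissible test function in the discrete primal problem \eqref{eq:primal_fem2}, so that $B(\uplus, \zplus) = L(\zplus)$. Substituting back gives $B(\eplus, \zplus) = L(\zplus) - B(u_h, \zplus)$, and combining this with the Taylor expansion produces the claimed representation.

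There is essentially no analytical obstacle here: the result is a discrete transcription of Lemma \ref{lma::error_representation}, and the only point requiring care is the bookkeeping of subspace membership. Specifically, one must verify that both $\eplus$ and $\zplus$ lie in $\Vplus$ so that the discrete adjoint and primal equations can be tested against them; this follows immediately from the inclusion $\mathcal{V}_h \subset \Vplus$. I would note that no Galerkin orthogonality is invoked in this derivation, which is precisely why posing the adjoint problem on the richer space $\Vplus$ rather than on $\mathcal{V}_h$ avoids the trivially-zero estimate flagged in the preliminaries.
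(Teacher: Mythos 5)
Your proposal is correct and follows exactly the paper's own proof: apply the Taylor expansion \eqref{eq:J_taylor_fem}, test the discrete adjoint problem \eqref{eq:adjoint_fem} with $\eplus$ to obtain $J'(u_h;\eplus) = B(\eplus,\zplus)$, and then use bilinearity together with the discrete primal problem \eqref{eq:primal_fem2} tested against $\zplus$. Your added bookkeeping that $\eplus, \zplus \in \Vplus$ via the inclusion $\mathcal{V}_h \subset \Vplus$ is a correct (if implicit in the paper) justification of the same steps.
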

\begin{proof}
The proof follows the same steps as Lemma \ref{lma:error_representation} using discrete
counterparts \eqref{eq:J_taylor_fem}, \eqref{eq:adjoint_fem}, and \eqref{eq:primal_fem2}.
\end{proof}

\begin{assumption}
\label{asm:saturation}
(Saturation) Let $u_h$ and $\uplus$ solve Equations \eqref{eq:primal_fem}
and \eqref{eq:primal_fem2}, respectively. Then there exists
$b_h \in (0,1)$ such that
\begin{equation}
|J(u) - J(\uplus)| \leq b_h |J(u) - J(u_h)|.
\end{equation}
\end{assumption}

\begin{remark}
We are unaware of a general way to verify the saturation assumption for
arbitrary nonlinear goal functionals. It is a common assumption
in hierarchical-based error estimation \cite{verfurth1996review} and has been
proven in specific contexts (e.g. \cite{carstensen2016justification}), but it has also been shown that the assumption
can fail \cite{bornemann1996posteriori, endtmayer2020two}.
\end{remark}

\begin{definition}
\label{def:reliable}
(Reliable) An error estimate $\eta \approx J(u) - J(u_h)$ is said
to be reliable if there exists a constant $C \in \mathbb{R}_{>0}$
independent of $u_h$ such that
\begin{equation}
|J(u) - J(u_h)| \leq C | \eta |.
\end{equation}
\end{definition}

\begin{lemma}
\label{lma:eta1_is_reliable}
Let $u_h$, $\uplus$, and $\zplus$ solve Equations \eqref{eq:primal_fem},
\eqref{eq:primal_fem2} and \eqref{eq:adjoint_fem}, respectively, and let
Assumption \ref{asm:saturation} hold. Then the estimate
$\eta_1 \approx J(u) - J(u_h)$ is reliable, with the constant
$C = 1/(1-b_h)$, where
\begin{equation}
\eta_1 := L(\zplus) - B(u_h, \zplus) + \int_0^1 J''(u_h+s\eplus;\eplus,\eplus)(1-s) \, \mathrm{d}s.
\label{eq:eta1}
\end{equation}
\end{lemma}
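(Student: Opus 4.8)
The plan is to recognize that the estimate $\eta_1$ is, by construction, an exact functional difference in the enriched space, and then to combine this identity with the saturation assumption through a single triangle-inequality argument. The starting point is to invoke Lemma~\ref{lma:error_representation_h}, whose right-hand side is literally the definition of $\eta_1$ in \eqref{eq:eta1}. Reading that error representation in reverse gives
\begin{equation*}
\eta_1 = J(\uplus) - J(u_h),
\end{equation*}
so the estimate is nothing other than the difference of the functional evaluated at the two discrete solutions.

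Next I would bring in the quantity that the saturation assumption controls, namely $J(u) - J(\uplus)$, and rewrite it via telescoping so as to expose both $\eta_1$ and the target error $J(u) - J(u_h)$:
\begin{equation*}
J(u) - J(\uplus) = \bigl(J(u) - J(u_h)\bigr) - \bigl(J(\uplus) - J(u_h)\bigr) = \bigl(J(u) - J(u_h)\bigr) - \eta_1.
\end{equation*}
Applying Assumption~\ref{asm:saturation} to the left-hand side then yields
\begin{equation*}
\bigl| \bigl(J(u) - J(u_h)\bigr) - \eta_1 \bigr| = |J(u) - J(\uplus)| \leq b_h\, |J(u) - J(u_h)|.
\end{equation*}

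The concluding step is a triangle inequality. Writing $J(u) - J(u_h) = \bigl[(J(u) - J(u_h)) - \eta_1\bigr] + \eta_1$ and inserting the bound just obtained gives
\begin{equation*}
|J(u) - J(u_h)| \leq \bigl| (J(u) - J(u_h)) - \eta_1 \bigr| + |\eta_1| \leq b_h\, |J(u) - J(u_h)| + |\eta_1|.
\end{equation*}
Since $b_h \in (0,b_0)$ with $b_0 \in (0,1)$, the factor $1 - b_h$ is strictly positive, so I can move the first term to the left and divide, obtaining $(1-b_h)\,|J(u) - J(u_h)| \leq |\eta_1|$, which is exactly the reliability bound with $C = 1/(1-b_h)$.

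There is essentially no hard step here: the whole argument rests on the single observation that $\eta_1$ coincides exactly with $J(\uplus) - J(u_h)$, after which reliability follows in one line from the saturation assumption and the triangle inequality. The only point warranting care is verifying that the constant $C = 1/(1-b_h)$ qualifies as independent of $u_h$ in the sense of Definition~\ref{def:reliable}; this is immediate, since $b_h$ is a fixed constant furnished by Assumption~\ref{asm:saturation} and does not vary with the particular discrete solution.
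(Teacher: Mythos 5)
Your proposal is correct and follows essentially the same route as the paper: both identify $\eta_1 = J(\uplus) - J(u_h)$ via Lemma~\ref{lma:error_representation_h}, split the error through $J(\uplus)$, apply the triangle inequality and Assumption~\ref{asm:saturation}, and absorb the $b_h$ term to obtain $C = 1/(1-b_h)$. Your telescoping rearrangement is just an algebraically equivalent rewriting of the paper's decomposition $J(u) - J(u_h) = J(u) - J(\uplus) + \eta_1$, so there is no substantive difference.
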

\begin{proof}
A generalized proof can be found in Endtmayer \emph{et al.} \cite{endtmayer2024posteriori}.
\end{proof}

\section{Results}
\label{sec:results}

Lemma \ref{lma:eta1_is_reliable}, in part, provides assurance that
$\eta_1$ will provide a useful error estimate. It is rare, however,
to account for the remainder term
$\int_0^1 J''(u_h + s\eplus;\eplus,\eplus)(1-s) \, \mathrm{d}s$ in
the performance of goal-oriented error estimation. The neglect of
this term is what we phrase a \emph{linearization error}. We now
show that neglecting this remainder term can lead to unreliable
error estimates for certain functionals within the context of the
chosen adjoint problem and the error estimates presented below
(see the discussions section for further elaboration).
In the following, we let $\Omega \subset \mathbb{R}^d$ denote an open,
bounded domain, where $d=1,2,3$ denotes a spatial dimension.

\begin{theorem}
\label{thm:result1}
Let $u$, $u_h$, and $z$ solve Equations \eqref{eq:primal},
\eqref{eq:primal_fem}, and \eqref{eq:adjoint}, respectively,
let $e := u - u_h$, and
let $J(u)$ be chosen so that $J'(u_h;e) = B(e, \psi_h)$ for some
$\psi_h \in \mathcal{V}_h$. Then the estimate
$\eta_2 \approx J(u) - J(u_h)$ is not reliable, where
\begin{equation}
\eta_2 := L(z) - B(u_h, z).
\label{eq:eta2}
\end{equation}
\end{theorem}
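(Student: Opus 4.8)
The plan is to show that, under the stated hypothesis, the estimate $\eta_2$ collapses to zero while the true error $J(u) - J(u_h)$ generically does not, so that the uniform bound demanded by Definition \ref{def:reliable} cannot hold.

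First I would rewrite $\eta_2$ in terms of the discretization error $e = u - u_h$. Testing the primal problem \eqref{eq:primal} with the admissible function $\varphi = z \in \mathcal{V}$ gives $L(z) = B(u, z)$, and bilinearity then yields
\begin{equation*}
\eta_2 = L(z) - B(u_h, z) = B(u, z) - B(u_h, z) = B(e, z).
\end{equation*}
Next I would invoke the continuous adjoint problem \eqref{eq:adjoint}, which is valid for all $\varphi \in \mathcal{V}$; choosing $\varphi = e \in \mathcal{V}$ produces $B(e, z) = J'(u_h; e)$, so that $\eta_2 = J'(u_h; e)$.

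The decisive step combines the structural hypothesis $J'(u_h; e) = B(e, \psi_h)$ with $\psi_h \in \mathcal{V}_h$ and Galerkin orthogonality. Since $u_h$ solves \eqref{eq:primal_fem}, subtracting it from \eqref{eq:primal} gives $B(e, \varphi_h) = 0$ for every $\varphi_h \in \mathcal{V}_h$; applied to $\varphi_h = \psi_h$ this yields $B(e, \psi_h) = 0$. Chaining the identities gives $\eta_2 = J'(u_h; e) = B(e, \psi_h) = 0$.

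It remains to translate $\eta_2 = 0$ into non-reliability. By Lemma \ref{lma::error_representation} the functional error decomposes as $J(u) - J(u_h) = \eta_2 + \int_0^1 J''(u_h + se; e, e)(1-s)\, \mathrm{d}s$, so with $\eta_2 = 0$ the error reduces to the pure remainder (linearization) term. Whenever $J$ is genuinely nonlinear this term need not vanish, in which case $|J(u) - J(u_h)| > 0 = C|\eta_2|$ for every $C \in \mathbb{R}_{>0}$, contradicting Definition \ref{def:reliable}. I expect the genuine obstacle to be precisely this last point: the identity $\eta_2 = 0$ is purely formal, but making the conclusion non-vacuous requires certifying that admissible pairs $(B, J)$ satisfying the hypothesis and with nonvanishing remainder actually exist, which is the role played by the explicit examples that follow.
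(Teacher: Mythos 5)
Your proof is correct, and it reaches the key identity $\eta_2 = 0$ by a slightly different and, in one respect, tighter route than the paper. The paper's proof equates $B(e,\psi_h) = B(e,z)$ and concludes ``$z \in \mathcal{V}_h$ by inspection,'' after which $\eta_2 = L(z) - B(u_h,z) = 0$ follows by testing the discrete primal problem \eqref{eq:primal_fem} with $z$. Strictly speaking, agreement of $B(e,\cdot)$ against the single function $e$ does not force $z = \psi_h$; that step is really justified only when the hypothesis $J'(u_h;\varphi) = B(\varphi,\psi_h)$ holds for all directions $\varphi$ (as it does in the applications, e.g.\ Theorem \ref{thm:result3}, where $\psi_h = \hat{C}u_h$), so that well-posedness of \eqref{eq:adjoint} identifies $z$ with $\psi_h \in \mathcal{V}_h$. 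Your chain $\eta_2 = B(u,z) - B(u_h,z) = B(e,z) = J'(u_h;e) = B(e,\psi_h) = 0$, using the continuous primal problem tested with $z$ and then Galerkin orthogonality $B(e,\psi_h) = 0$, needs the hypothesis only in the single direction $e$ and never needs to locate $z$ at all; it thus proves the theorem exactly as stated while sidestepping the paper's ``by inspection'' step. Your closing observation is also well placed: both proofs conclude $\nexists\, C$ from $\eta_2 = 0$, which is non-vacuous only when $J(u) - J(u_h) \neq 0$, i.e.\ when the remainder $\int_0^1 J''(u_h+se;e,e)(1-s)\,\mathrm{d}s$ in Lemma \ref{lma::error_representation} does not vanish — precisely the role the paper delegates to Examples \ref{ex:poisson} and \ref{ex:elasticity}. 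In short: same underlying mechanism (the functional derivative mimics Galerkin orthogonality), but your version is marginally more elementary and more robust in its hypotheses.
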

\begin{proof}
\begin{align*}
0 &= B(e, \psi_h),        && \text{by Galerkin orthogonality}, \\
  &= J'(u_h; e),          && \text{by assumption}, \\
  &= B(e, z),             && \text{by Equation \eqref{eq:adjoint}}, \\
  &= B(u, z) - B(u_h, z)  && \text{by bilinearity}, \\
  &= L(z) - B(u_h, z)     && \text{by Equation \eqref{eq:primal}}.
\end{align*}
Thus, $\eta_2 = 0$ and
$\nexists \, C \, \mathrm{s.t.} \, |J(u) - J(u_h)| \leq C |\eta_2|.$
\end{proof}

Stated otherwise, Theorem \ref{thm:result1} demonstrates that there exist
functionals for which the entirety of the functional discretization error
is contained within the second-order remainder term $J''$ in the expansion
\eqref{eq:J_taylor}, i.e. $J'(u_h;e) = 0$, because the term
$J'(u_h;e)$ mimics the Galerkin orthogonality principle of the original
variational problem.

\begin{theorem}
\label{thm:result2}
Let $u_h$, $\uplus$, and $\zplus$ solve Equations \eqref{eq:primal_fem},
\eqref{eq:primal_fem2}, and \eqref{eq:adjoint_fem}, respectively, let
$\eplus := \uplus - u_h$, and
let $J(u)$ be chosen so that $J'(u_h;\eplus) = B(\eplus, \psi_h)$ for
some $\psi_h \in \mathcal{V}_h$. Then the estimate
$\eta_3 \approx J(u) - J(u_h)$ is not reliable, where
\begin{equation}
\eta_3 := L(\zplus) - B(u_h, \zplus).
\end{equation}
\end{theorem}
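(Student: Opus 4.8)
The plan is to mimic the argument of Theorem~\ref{thm:result1}, replacing the continuous adjoint problem \eqref{eq:adjoint} with its finite element counterpart \eqref{eq:adjoint_fem} posed over the enriched space $\Vplus$. The central observation is that the hypothesis $J'(u_h;e) = B(e,\psi_h)$ is to be read as the functional identity $J'(u_h;\cdot) = B(\cdot,\psi_h)$, exactly as in the proof of Theorem~\ref{thm:result1}, so that $\psi_h \in \mathcal{V}_h$ itself satisfies the adjoint equation. Since $\mathcal{V}_h \subset \Vplus$, this candidate lies in the enriched space, and uniqueness of the discrete adjoint solution will force $\zplus = \psi_h$, which in particular puts $\zplus$ back in the \emph{coarse} space $\mathcal{V}_h$. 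Once the adjoint solution is pinned to $\mathcal{V}_h$, Galerkin orthogonality of the primal problem \eqref{eq:primal_fem} makes $\eta_3$ vanish identically, precluding any reliability bound.

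First I would substitute the hypothesis into the discrete adjoint equation \eqref{eq:adjoint_fem}: for every $\phiplus \in \Vplus$ one has
\begin{equation*}
B(\phiplus, \zplus) = J'(u_h; \phiplus) = B(\phiplus, \psi_h),
\end{equation*}
whence $B(\phiplus, \zplus - \psi_h) = 0$ for all $\phiplus \in \Vplus$. Because $\psi_h \in \mathcal{V}_h \subset \Vplus$, the function $\psi_h$ is an admissible solution of \eqref{eq:adjoint_fem}; invoking well-posedness of the discrete adjoint problem then yields $\zplus = \psi_h$, and hence $\zplus \in \mathcal{V}_h$.

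Next I would exploit that $\zplus$ lies in the coarse space $\mathcal{V}_h$. Taking $\varphi_h = \zplus$ as a test function in the primal finite element problem \eqref{eq:primal_fem} gives $B(u_h, \zplus) = L(\zplus)$, so that
\begin{equation*}
\eta_3 = L(\zplus) - B(u_h, \zplus) = 0.
\end{equation*}
Since the functional discretization error $J(u) - J(u_h)$ need not vanish for the functionals under consideration, no constant $C \in \mathbb{R}_{>0}$ can satisfy $|J(u) - J(u_h)| \leq C|\eta_3| = 0$, and reliability fails in the sense of Definition~\ref{def:reliable}.

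The main obstacle is the identification $\zplus = \psi_h$: the relation $B(\phiplus, \zplus - \psi_h) = 0$ for all $\phiplus \in \Vplus$ does not on its own equate the two arguments in the right slot, so one must supply uniqueness of the discrete adjoint solution over $\Vplus$. This is precisely where the inclusion $\mathcal{V}_h \subset \Vplus$ is essential: the candidate $\psi_h$ must itself belong to the trial space of \eqref{eq:adjoint_fem} for the uniqueness argument to identify it with $\zplus$. The conceptual point is that enriching the adjoint space from $\mathcal{V}_h$ to $\Vplus$ does \emph{not} help here, because the adjoint solution remains trapped in the coarse space $\mathcal{V}_h$; apart from this well-posedness input, the remaining steps are the same bilinearity and Galerkin-orthogonality manipulations used in Theorem~\ref{thm:result1}.
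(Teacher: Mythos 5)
Your proof is correct and follows essentially the same route as the paper: the paper's one-line proof observes that the discrete adjoint problem \eqref{eq:adjoint_fem} exactly recovers the solution $z \in \mathcal{V}_h$ from Theorem~\ref{thm:result1}, which is precisely your identification $\zplus = \psi_h \in \mathcal{V}_h$ via uniqueness over $\Vplus$, after which Galerkin orthogonality forces $\eta_3 = 0$. Your version merely spells out on the discrete level what the paper compresses into a citation of Theorem~\ref{thm:result1}, and your explicit appeal to well-posedness of the discrete adjoint problem is a fair (indeed slightly more careful) rendering of the paper's implicit assumption.
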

\begin{proof}
Note a discrete form of Galerkin orthogonality,
$B(\eplus, \varphi_h) = 0 \; \forall \varphi_h \in \mathcal{V}_h$,
holds when $\mathcal{V}_h \subset \Vplus$. Starting from this point,
the proof follows similar steps as the proof for Theorem \ref{thm:result1}
using instead the discrete equations \eqref{eq:adjoint_fem} and \eqref{eq:primal_fem2}.
\end{proof}

\begin{theorem}
\label{thm:result3}
Let $u$, $u_h$, $\uplus$, $z$, and $\zplus$ solve Equations
\eqref{eq:primal}, \eqref{eq:primal_fem}, \eqref{eq:primal_fem2},
\eqref{eq:adjoint}, and \eqref{eq:adjoint_fem}, respectively,
let $B(\cdot, \cdot)$ be symmetric, let $G$ denote a once
differentiable function everywhere it is defined, and let
$J(u) = G(B(u,u))$. Then the estimates $\eta_2$ and $\eta_3$
from Theorems \ref{thm:result1} and \ref{thm:result2}, respectively,
are not reliable.
\end{theorem}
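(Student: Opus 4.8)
The plan is to reduce Theorem~\ref{thm:result3} to the two preceding theorems by verifying that the functional $J(u) = G(B(u,u))$ satisfies the structural hypothesis $J'(u_h; e) = B(e, \psi_h)$ for some $\psi_h \in \mathcal{V}_h$. Once this is established, the non-reliability of both $\eta_2$ and $\eta_3$ follows directly from Theorems~\ref{thm:result1} and~\ref{thm:result2}, whose remaining hypotheses are already in force here.

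First I would compute the first G\^{a}teaux derivative of $J$ at $u_h$ from Definition~\ref{def:functional_derivs}. Expanding the inner argument by bilinearity and invoking the symmetry of $B$ gives $B(u_h + \eps v, u_h + \eps v) = B(u_h, u_h) + 2\eps B(u_h, v) + \eps^2 B(v, v)$, so by the chain rule
\begin{equation*}
J'(u_h; v) = \frac{\mathrm{d}}{\mathrm{d} \eps} G\bigl( B(u_h, u_h) + 2\eps B(u_h, v) + \eps^2 B(v, v) \bigr) \biggr|_{\eps = 0} = 2 G'\!\left( B(u_h, u_h) \right) B(u_h, v),
\end{equation*}
where $G'$ is evaluated at the real number $B(u_h, u_h)$, a well-defined quantity since $G$ is differentiable everywhere it is defined.

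Next I would use the symmetry of $B$ a second time to rewrite $B(u_h, v) = B(v, u_h)$ and absorb the scalar prefactor into the second argument, obtaining $J'(u_h; v) = B(v, \psi_h)$ with $\psi_h := 2 G'\!\left( B(u_h, u_h) \right) u_h$. The crucial observation is that $\psi_h \in \mathcal{V}_h$: the coefficient $2 G'\!\left( B(u_h, u_h) \right)$ is a real scalar, $u_h \in \mathcal{V}_h$, and $\mathcal{V}_h$ is a linear subspace and hence closed under scalar multiplication. Taking $v = e$ then reproduces exactly the hypothesis $J'(u_h; e) = B(e, \psi_h)$ demanded by Theorems~\ref{thm:result1} and~\ref{thm:result2}, completing the reduction.

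The derivative calculation is routine; the only point requiring genuine care is the membership $\psi_h \in \mathcal{V}_h$, which rests on two facts. The symmetry of $B$ is what allows the entire derivative to collapse into the single form $B(v, \cdot)$, rather than a sum $B(v, \cdot) + B(\cdot, v)$ in which the partner of the first term cannot be moved into a $\mathcal{V}_h$ argument; and the chain-rule prefactor must be a scalar rather than an element of $\mathcal{V}$ in order to keep $\psi_h$ inside the subspace. I do not anticipate any deeper obstacle, since the required differentiability of $G$ at $B(u_h, u_h)$ is guaranteed by hypothesis.
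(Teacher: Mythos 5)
Your proposal is correct and follows essentially the same route as the paper's proof: compute $J'(u_h;v)$ via the G\^{a}teaux definition and bilinearity, use symmetry of $B$ to collapse the derivative to $2G'(B(u_h,u_h))B(v,u_h)$, and observe that the scalar prefactor keeps $\psi_h = 2G'(B(u_h,u_h))\,u_h$ inside $\mathcal{V}_h$, so that Theorems \ref{thm:result1} and \ref{thm:result2} apply. The only cosmetic difference is that you expand $B(u_h+\eps v, u_h+\eps v)$ fully before differentiating, whereas the paper differentiates first and invokes symmetry afterward.
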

\begin{proof}
\begin{align*}
J'(u_h;e) &= \frac{\mathrm{d}}{\mathrm{d}\eps} G(B(u_h + \eps e, u_h + \eps e)) \biggr|_{\eps=0}, && \\
&= G'(B(u_h + \eps e, u_h + \eps e)) \frac{\mathrm{d}}{\mathrm{d}\eps} B(u_h + \eps e, u_h + \eps e) \biggr|_{\eps=0}, && \\
&= G'(B(u_h,u_h)) 2 B(u_h, e),      && \text{by bilinearity}, \\
&= 2 G'(B(u_h,u_h)) B(e, u_h),      && \text{by symmetry}, \\
&= B(e, \hat{C} u_h),               && \text{by linearity},
\end{align*}
where $\hat{C} := 2G'(B(u_h,u_h)) \in \mathbb{R}$ is a constant,
so that $\hat{C} u_h \in \mathcal{V}_h$. Theorem
\ref{thm:result1} can then be applied, showing that the
estimate $\eta_2$ is not reliable. Following the same steps
and replacing $e$ with $\eplus$ shows that
$J'(u_h; \eplus) = B(\eplus, \hat{C} u_h)$, where
Theorem \ref{thm:result2} can then be applied to show that
the estimate $\eta_3$ is not reliable.
\end{proof}

\begin{corollary}
\label{cor:result4}
Let $u$, $u_h$, $\uplus$, $z$, and $\zplus$ solve Equations
\eqref{eq:primal}, \eqref{eq:primal_fem}, \eqref{eq:primal_fem2},
\eqref{eq:adjoint}, and \eqref{eq:adjoint_fem}, respectively. Let
$B(\cdot,\cdot)$ be symmetric and let $J(u) := \sqrt{B(u,u)}$
be the energy induced by the bilinear form. Then the
estimates $\eta_2$ and $\eta_3$ from Theorems \ref{thm:result1} and
\ref{thm:result2}, respectively, are not reliable.
\end{corollary}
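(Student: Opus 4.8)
The plan is to recognize that the energy functional $J(u) = \sqrt{B(u,u)}$ is simply the special case of the composite functional $J(u) = G(B(u,u))$ already treated in Theorem \ref{thm:result3}, obtained by choosing $G(t) = \sqrt{t}$. Once this identification is in place, the conclusion that $\eta_2$ and $\eta_3$ are not reliable follows immediately by invoking Theorem \ref{thm:result3}, provided the differentiability hypothesis on $G$ is verified at the point where $G'$ is actually evaluated in that theorem's proof.

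First I would set $G(t) := \sqrt{t}$ and record that $G$ is once differentiable everywhere it is defined, with $G'(t) = 1/(2\sqrt{t})$ for $t > 0$. The only delicate point is $t = 0$, where $\sqrt{t}$ fails to be differentiable, so the phrase \emph{everywhere it is defined} must be read as referring to the open domain $t > 0$. The main obstacle is therefore to confirm that the argument $B(u_h,u_h)$, at which $G'$ is evaluated in the proof of Theorem \ref{thm:result3}, lies strictly inside this domain of differentiability, i.e. that $B(u_h,u_h) > 0$. Because $J$ is the energy \emph{norm} induced by $B$, the bilinear form is symmetric positive definite, so that $B(w,w) > 0$ for every nonzero $w \in \mathcal{V}$; assuming the discrete primal solution $u_h$ is nontrivial, we then obtain $B(u_h,u_h) > 0$, and hence $\hat{C} = 2G'(B(u_h,u_h)) = 1/\sqrt{B(u_h,u_h)}$ is a well-defined positive real constant.

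With differentiability of $G$ at $B(u_h,u_h)$ secured, the functional $J(u) = \sqrt{B(u,u)} = G(B(u,u))$ satisfies precisely the hypotheses of Theorem \ref{thm:result3}, yielding $J'(u_h;e) = B(e, \hat{C}u_h)$ with $\hat{C}u_h \in \mathcal{V}_h$. Applying Theorem \ref{thm:result3} directly then gives that neither $\eta_2$ nor $\eta_3$ is reliable, completing the proof. I expect the entire argument to be routine once the positivity of $B(u_h,u_h)$ is established; the only genuine subtlety is the non-differentiability of the square root at the origin, which is disposed of by the positive definiteness underlying the energy norm.
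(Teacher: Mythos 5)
Your proposal is correct and follows exactly the paper's route: the paper likewise proves the corollary by invoking Theorem \ref{thm:result3} with $G(\cdot) = \sqrt{\cdot} : \mathbb{R}_{>0} \to \mathbb{R}_{>0}$. Your extra discussion of the non-differentiability of $\sqrt{t}$ at the origin and the requirement $B(u_h,u_h) > 0$ is a careful elaboration of what the paper encodes implicitly in that domain restriction, not a different argument.
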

\begin{proof}
The result follows directly from Theorem \ref{thm:result3}, where
$G(\cdot) = \sqrt{\cdot} : \mathbb{R}_{>0} \to \mathbb{R}_{>0}$.
\end{proof}

\begin{example}
\label{ex:poisson}
(Poisson's equation):
Let $B(u,\varphi) := \int_{\Omega} \nabla u \cdot \nabla \varphi \, \mathrm{d}\Omega$
and $J(u) := \int_{\Omega} \nabla u \cdot \nabla u \, \mathrm{d}\Omega$.
Then we have
\begin{equation*}
J'(u_h; e) = \int_{\Omega} \nabla e \cdot (2 \nabla u_h) \, \mathrm{d} \Omega =
B(e, 2 u_h),
\end{equation*}
and since $2u_h \in \mathcal{V}_h$, Theorems \ref{thm:result1} and \ref{thm:result2}
hold.
\end{example}

\begin{example}
\label{ex:elasticity}
(Isotropic linear elasticity):
Let $B(u, \varphi) := \int_{\Omega} \sigma(u) : \nabla \varphi \, \mathrm{d} \Omega.$
and let $J(u) := \sqrt{\int_{\Omega} \sigma(u) : \epsilon(u)}$
be the so-called \emph{strain-energy density}. Here,
$u$ and $\varphi$ are vector-valued, $\epsilon$ and $\sigma$ are second-order
tensor-valued, $\epsilon(u) := \frac12(\nabla u + \nabla u^T)$,
and $\sigma(u) := \lambda \mathrm{tr}(\epsilon(u)) I + 2 \mu \epsilon(u)$,
where $I$ is the second-order identity tensor, $\lambda$ denotes Lam\'{e}'s first
parameter, and $\mu$ denotes a shear modulus. It not difficult to show that
\begin{align*}
\int_{\Omega} \sigma(u) : \nabla \varphi \, \mathrm{d} \Omega =
\int_{\Omega} \sigma(u) : \epsilon(\varphi) \, \mathrm{d} \Omega,
\end{align*}
and that $B(u,\varphi)$ is symmetric. Thus, $J(u) = \sqrt{B(u,u)}$ and
Corollary \ref{cor:result4} holds.
\end{example}

\begin{remark}
The chosen quantities of interest in the previous examples may not be of primary interest
when used in a single-goal error control context. However, in a multi-goal error
control context \cite{endtmayer2024posteriori}, it may be desirable to control
errors in energy norm-like functionals in addition to multiple other quantities
with the use of multi-goal oriented error estimation machinery. In such a scenario, the examples
demonstrate that the energy norm-like functionals may not be reliably accounted for.
\end{remark}

\begin{results}
\label{res:poisson}
To provide a concrete numerical demonstration, we consider Poisson's
equation posed on the domain $\Omega := \{\bs{x} : \bs{x} \in (-1,1) \times (-1,-1)
\setminus [-\nicefrac12,\nicefrac12] \times [-\nicefrac12,\nicefrac12] \}$ with
homogenous Dirichlet boundary conditions, the right-hand
side functional $L(\varphi) := \int_{\Omega} f \varphi \, \text{d} \Omega$, where $f = 100$,
and the functional quantity of interest described in Example \ref{ex:poisson}.
We consider solving the primal and adjoint problems with quadrilateral elements
with various combinations of Lagrange basis functions of orders $p$ and $q$,
respectively, where $p < q$, while using several mesh sizes $h$. Let
$\mathcal{E} := J(u) - J(u_h)$ denote the functional discretization error and
let $J(u) \approx 6.703016825 \times 10^{2}$ be a numerically determined
`true' value of the quantity of interest, obtained numerically using 
$1.5$ million degrees of freedom. Table \ref{tab:results1} illustrates that the
computable error estimate $\eta_3$ is unreliable for all tested $p,q$, and $h$
combinations, yielding an error estimate that is numerically zero (lower than the
CG iterative solver tolerance of $10^{-10}$), while the actual functional discretization error is not zero.
Figure \ref{fig:results2} shows the computational domain and mesh with
mesh size $h=\nicefrac{1}{16}$, the finite element solution $u_h$, and an
approximation of element-level contributions to the functional discretization
error $\mathcal{E}$.

\begin{table}[ht!]
\begin{center}
\begin{tabular}{ccccc}
\toprule
$h$ & $p$ & $q$ & $\mathcal{E}$ & $\eta_3$ \\
\midrule
$\nicefrac{1}{8}$  & $1$ & $2$ & $4.589 \times 10^{1}$ & $ 6.157 \times 10^{-11}$ \\
$\nicefrac{1}{8}$  & $1$ & $3$ & $4.589 \times 10^{1}$ & $-8.594 \times 10^{-11}$ \\
$\nicefrac{1}{8}$  & $1$ & $4$ & $4.589 \times 10^{1}$ & $-1.506 \times 10^{-11}$ \\
$\nicefrac{1}{8}$  & $2$ & $3$ & $1.692 \times 10^{0}$  & $-5.572 \times 10^{-11}$ \\
$\nicefrac{1}{8}$  & $2$ & $4$ & $1.692 \times 10^{0}$  & $-1.360 \times 10^{-11}$ \\
$\nicefrac{1}{16}$ & $1$ & $2$ & $1.273 \times 10^{1}$ & $-9.689 \times 10^{-12}$ \\
$\nicefrac{1}{16}$ & $1$ & $3$ & $1.273 \times 10^{1}$ & $ 4.688 \times 10^{-12}$ \\
$\nicefrac{1}{16}$ & $1$ & $4$ & $1.273 \times 10^{1}$ & $ 2.724 \times 10^{-11}$ \\
$\nicefrac{1}{16}$ & $2$ & $3$ & $6.517 \times 10^{-1}$ & $-2.796 \times 10^{-11}$ \\
$\nicefrac{1}{16}$ & $2$ & $4$ & $6.517 \times 10^{-1}$ & $-1.821 \times 10^{-11}$ \\
\bottomrule
\end{tabular}
\end{center}
\caption{The error estimate $\eta_3$ compared to the `true' error $\mathcal{E}$ for
the problem described in the numerical demonstration
when the primal and adjoint problems are solved with polynomial
orders $p$ and $q$, respectively, with varying mesh size $h$.}
\label{tab:results1}
\end{table}

\begin{figure}[ht!]
\centering
\begin{subfigure}{0.33\textwidth}
\centering
\includegraphics[width=.99\linewidth]{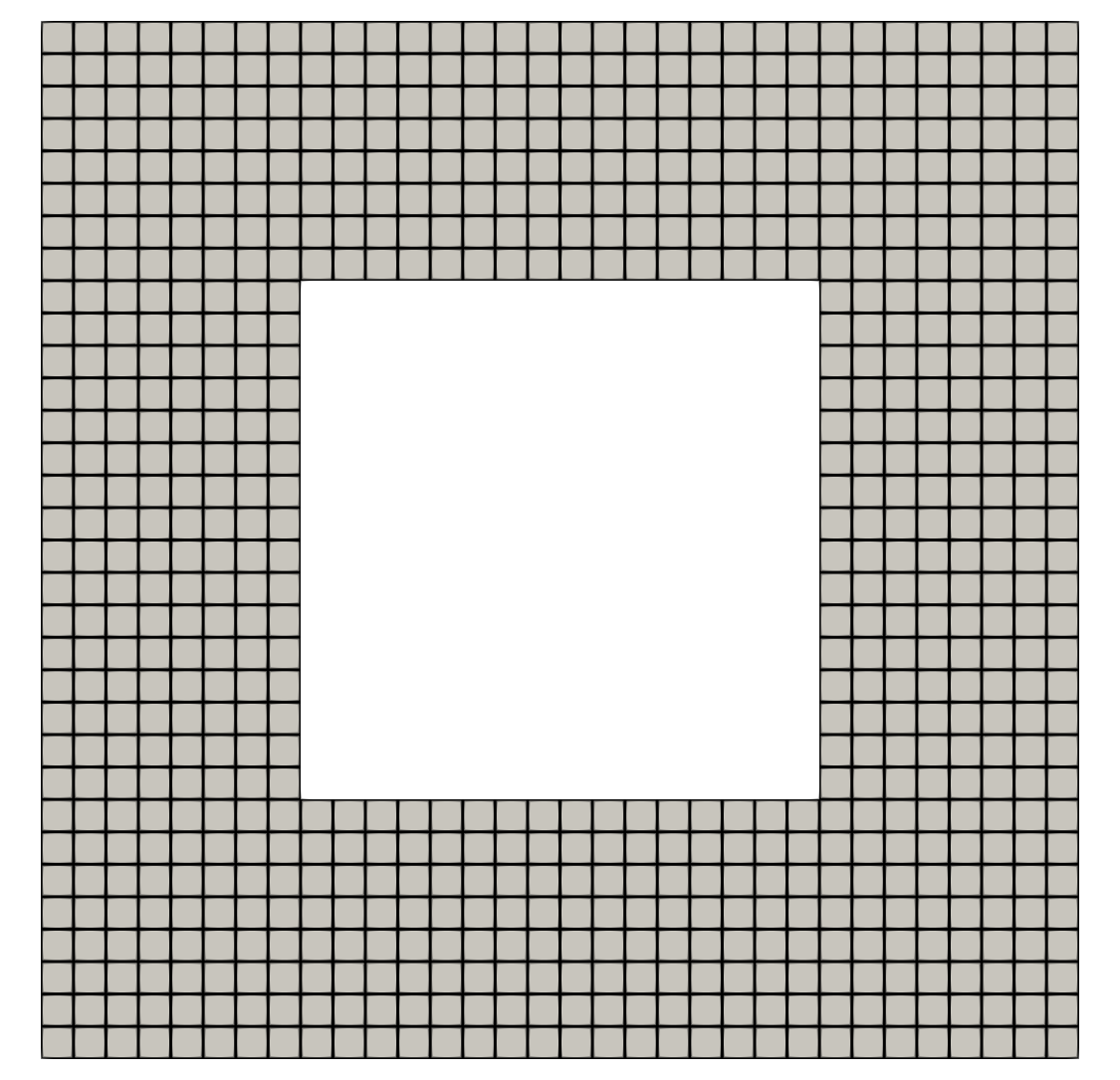}
\end{subfigure}%
\begin{subfigure}{0.33\textwidth}
\centering
\includegraphics[width=.99\linewidth]{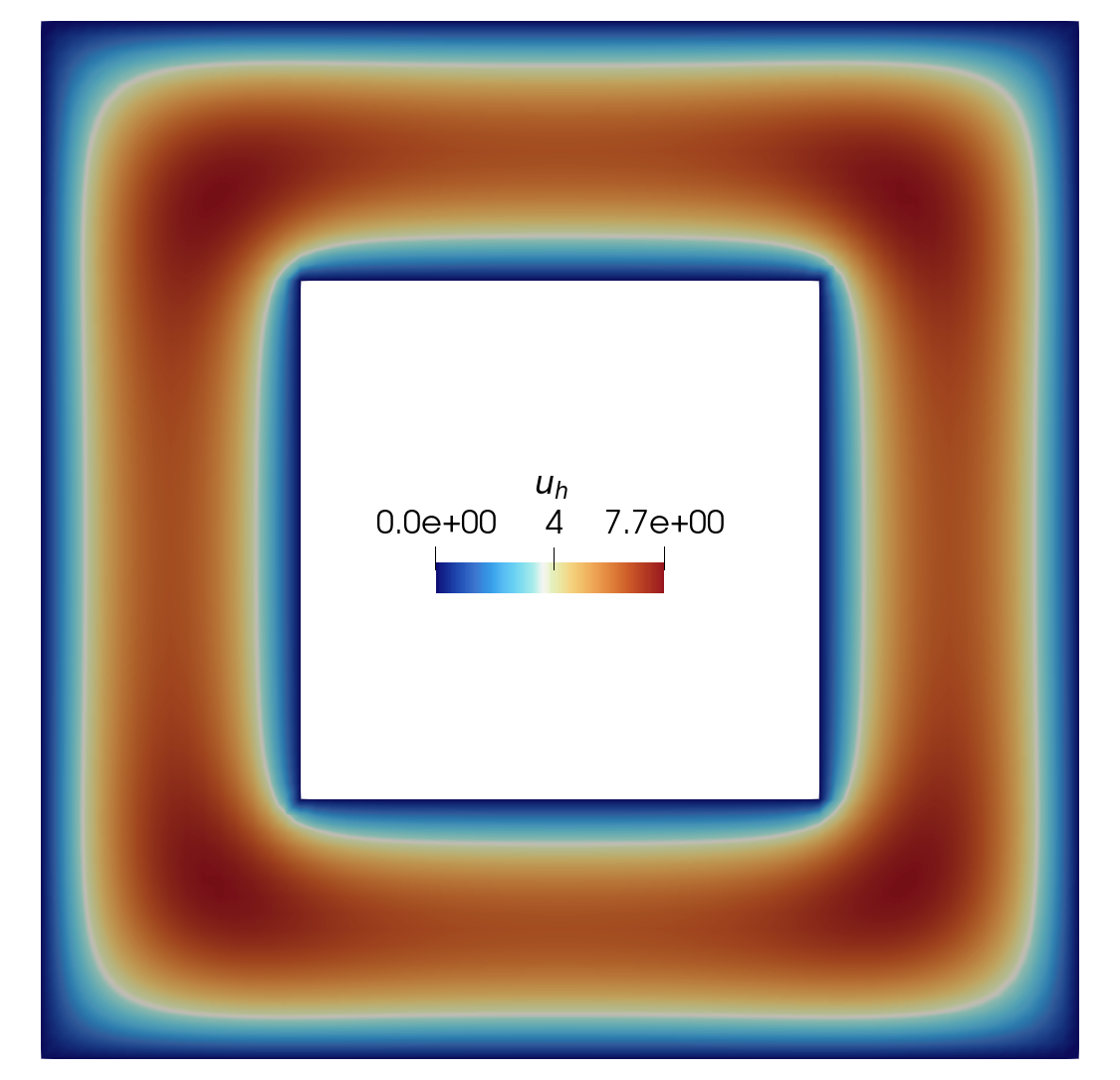}
\end{subfigure}%
\begin{subfigure}{0.33\textwidth}
\centering
\includegraphics[width=.99\linewidth]{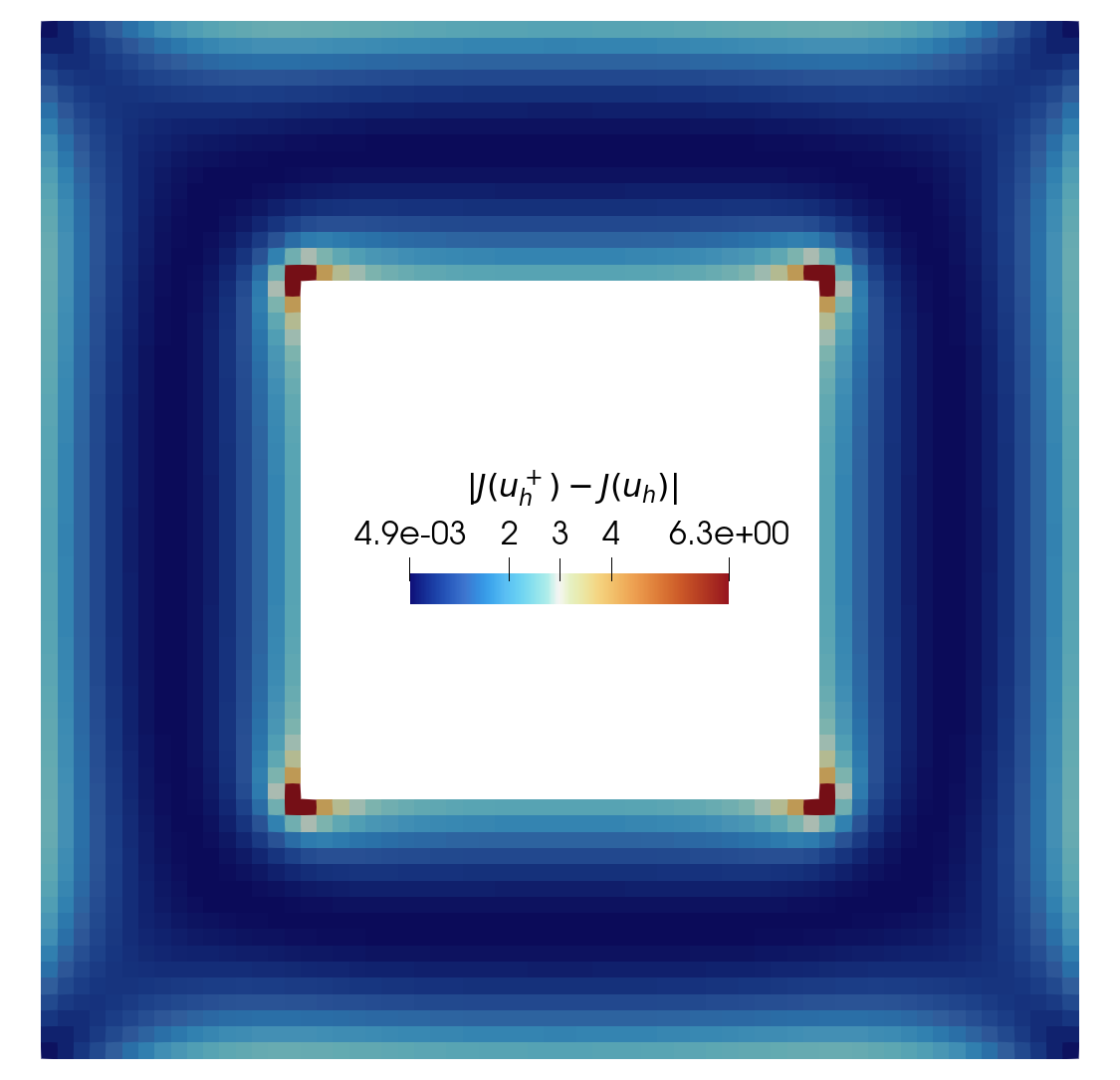}
\end{subfigure}
\caption{For the numerical demonstration:
(left) the domain $\Omega$ and computational mesh with size $h=\nicefrac{1}{16}$,
(center) the finite element solution $u_h$,
(left) a graphical approximation to element-level contributions to the error
$J(u) - J(u_h)$ by plotting $|J(\uplus) - J(u_h)| \bigr|_{\Omega^e}$, where
$\Omega^e$ corresponds to an individual element.}
\label{fig:results2}
\end{figure}

\end{results}

\section{Discussion}
\label{sec:discussion}

In this note, we have illustrated that a certain class of nonlinear
functionals can lead to unreliable error estimates \emph{for a particular
choice of adjoint problem statement, adjoint discretization, and
error estimate}. To elaborate, defining the adjoint problem itself presents
a choice about where linearizations occur. Presently, we have chosen the
point $u_h$ to carry about linearizations. Our motivation for doing so
stems primarily from familiarity with the \emph{discrete} goal-oriented
error estimation literature \cite{fidkowski2011review}, where this is common.
This background ultimately led us to investigations of linearization errors
\cite{granzow2023linearization}, which we have further understood in this
note.

It is also common, however, in the literature
\cite{becker2001optimal, endtmayer2024posteriori},
to carry about linearizations with respect to the exact solution $u$
and derive an error representation that is higher-order than the ones
considered in this note. This representation contains the primal
\emph{and} adjoint residuals as functions of the discretization errors $e:=u-u_h$ and
$e^*:=z-z_h$. In this context, it is common to approximate the
discretization error as $e \approx \uplus-u_h$ and, since one has
access to the fine-space primal solution, $\uplus$, it is possible to
approximate the adjoint problem by linearizing about the fine-space
solution $\uplus$. We suggest investigating linearization errors in
this setting as an avenue for future work.

Lastly, we remark that we have observed something similarly akin to
a loss of reliability in the context of \emph{nonlinear} variational problems
\cite{granzow2023linearization} and not simply within the setting of bilinear forms.
Further, we numerically observed poor error estimates for \emph{local}
quantities of interest (e.g. ones integrated over only a sub-domain), where
Galerkin orthogonality does not strictly hold. Although Theorems
\ref{thm:result1} and \ref{thm:result2} do not strictly assume symmetry
of the bilinear form, it is not clear to us that functionals can be
constructed so that $J'(u_h;e) = B(e;\varphi_h)$ for non-symmetric
bilinear forms. It would be interesting to know whether the present
analysis could be extended to such a context.
We leave investigation into both of these matters as an avenue for
interesting future work.



\begin{thebibliography}{10}

\bibitem{ainsworth1997posteriori}
Mark Ainsworth and J.~Tinsley Oden.
\newblock A posteriori error estimation in finite element analysis.
\newblock {\em Computer Methods in Applied Mechanics and Engineering},
  142(1-2):1--88, 1997.

\bibitem{becker1996feed}
Roland Becker and Rolf Rannacher.
\newblock {\em A feed-back approach to error control in finite element methods:
  Basic analysis and examples}, volume~4.
\newblock IWR, 1996.

\bibitem{becker2001optimal}
Roland Becker and Rolf Rannacher.
\newblock An optimal control approach to a posteriori error estimation in
  finite element methods.
\newblock {\em Acta numerica}, 10:1--102, 2001.

\bibitem{bornemann1996posteriori}
Folkmar~A Bornemann, Bodo Erdmann, and Ralf Kornhuber.
\newblock A posteriori error estimates for elliptic problems in two and three
  space dimensions.
\newblock {\em SIAM journal on numerical analysis}, 33(3):1188--1204, 1996.

\bibitem{carstensen2016justification}
Carsten Carstensen, Dietmar Gallistl, and Joscha Gedicke.
\newblock Justification of the saturation assumption.
\newblock {\em Numerische Mathematik}, 134(1):1--25, 2016.

\bibitem{endtmayer2020two}
B.~Endtmayer, U.~Langer, and T.~Wick.
\newblock Two-side a posteriori error estimates for the dual-weighted residual
  method.
\newblock {\em SIAM Journal on Scientific Computing}, 42(1):A371--A394, 2020.

\bibitem{endtmayer2024posteriori}
Bernhard Endtmayer, Ulrich Langer, Thomas Richter, Andreas Schafelner, and
  Thomas Wick.
\newblock A posteriori single-and multi-goal error control and adaptivity for
  partial differential equations.
\newblock {\em arXiv preprint arXiv:2404.01738}, 2024.

\bibitem{fidkowski2011review}
Krzysztof~J Fidkowski and David~L Darmofal.
\newblock Review of output-based error estimation and mesh adaptation in
  computational fluid dynamics.
\newblock {\em AIAA journal}, 49(4):673--694, 2011.

\bibitem{granzow2023linearization}
Brian~N Granzow, D~Thomas Seidl, and Stephen~D Bond.
\newblock Linearization errors in discrete goal-oriented error estimation.
\newblock {\em Computer Methods in Applied Mechanics and Engineering},
  416:116364, 2023.

\bibitem{granzow2017output}
Brian~N Granzow, Mark~S Shephard, and Assad~A Oberai.
\newblock Output-based error estimation and mesh adaptation for variational
  multiscale methods.
\newblock {\em Computer Methods in Applied Mechanics and Engineering},
  322:441--459, 2017.

\bibitem{oden2001goal}
J~Tinsley Oden and Serge Prudhomme.
\newblock Goal-oriented error estimation and adaptivity for the finite element
  method.
\newblock {\em Computers \& Mathematics with Applications}, 41(5-6):735--756,
  2001.

\bibitem{oden2002estimation}
J~Tinsley Oden and Serge Prudhomme.
\newblock Estimation of modeling error in computational mechanics.
\newblock {\em Journal of Computational Physics}, 182(2):496--515, 2002.

\bibitem{stewart1998tutorial}
James~R Stewart and Thomas~JR Hughes.
\newblock A tutorial in elementary finite element error analysis: A systematic
  presentation of a priori and a posteriori error estimates.
\newblock {\em Computer Methods in Applied Mechanics and Engineering},
  158(1-2):1--22, 1998.

\bibitem{verfurth1996review}
R{\"u}diger Verf{\"u}rth.
\newblock {\em A review of a posteriori error estimation and adaptive
  mesh-refinement techniques}.
\newblock Wiley-Teubner, 1996.

\end{thebibliography}
\end{document}